\numberwithin{equation}{section}
\theoremstyle{plain}
\newtheorem{theorem}[equation]{Theorem}
\newtheorem{lemma}[equation]{Lemma}
\newtheorem{corollary}[equation]{Corollary}
\newtheorem{remark}[equation]{Remark}
\newtheorem{openproblem}[equation]{Open problem}
\newcommand{\K}{{\mathcal K}}
\newcommand{\E}{{\mathcal E}}
\title{A Power Mean Inequality involving the complete elliptic integrals }
\author[G.D. Wang]{Gendi Wang}
\author[X.H. Zhang]{Xiaohui Zhang}
\author[Y.M. Chu]{Yuming Chu}
\address{College of Mathematics and Econometrics, Hunan University, Changsha 410082, China}
\email{wgdwang@gmail.com}
\address{Department of Mathematics and Statistics, University of Turku, FI-20014 Turku, Finland}
\email{xiazha@utu.fi}
\address{Department of Mathematics, Huzhou Teachers College, Huzhou 313000, China}
\email{chuyuming@hutc.zj.cn}
\date{}
\begin{document}

\maketitle

\begin{abstract}
In this paper the authors investigate a power
mean inequality for a special function which is defined by the complete elliptic integrals.
\end{abstract}

{\small \sc Keywords:} {\small complete elliptic integrals, power mean, inequality}

{\small \sc 2010 MSC:} {\small 33E05, 30C62}

\section{Introduction}

Throughout this paper we let $r'=\sqrt{1-r^2}$ for $0<r<1$.
The well-known complete elliptic integrals of the first and second kind \cite{Bo, BF}
are respectively defined by
\begin{equation}\label{eqn:K}
\left\{\begin{array}{ll}
\K(r)=\int_{0}^{\pi/2}\frac{d \theta}{\sqrt{1-r^2\sin^2\theta}},\\
\K'(r)=\K(r'),\\
\K(0)=\frac{\pi}2,\qquad\K(1)=\infty,
\end{array}\right.
\end{equation}
and
\begin{equation}\label{eqn:E}
\left\{\begin{array}{ll}
\E(r)=\int_{0}^{\pi/2}\sqrt{1-r^2\sin^2\theta} d\theta,\\
\E'(r)=\E(r'),\\
\E(0)=\frac{\pi}2,\qquad\E(1)=1.
\end{array}\right.
\end{equation}
In the sequel, we will use the symbols $\K$ and
$\E$ for $\K(r)$ and $\E(r)$, respectively.
The complete elliptic integrals play a very important role in the study of conformal invariants \cite{AVV3}, quasiconformal
mappings \cite{AVV1, AVV2, AVV3, H} and Ramanujan's modular equations \cite{AQVV}. Numerous sharp inequalities and elementary approximations for the complete elliptic integrals have been proved in \cite{AQ, AQVa, AVV3, BPR, KN1, KN2}.

The special function $m(r)$ is defined as
$$m(r)=\frac2{\pi}r'^2\K(r)\K'(r),\quad\, 0<r<1.$$
This function is of importance in the research of distortion theory of quasiconformal mappings in the plane.  
Recently, various interesting inequalities of $m(r)$ have been obtained by several authors, see \cite{AQVa, AVV1, AVV2, AVV3, QVu}.

The power mean is
defined for $x,y>0$ and real parameter $\lambda$ by
$$M_{\lambda}(x,y)=\left(\frac{x^{\lambda}+y^{\lambda}}{2}\right)^{1/{\lambda}} \, \mbox{for}\, {{\lambda}\neq 0},\, \mbox{and}\, M_0(x,y)=\sqrt{xy}.$$
It is well known that $M_{\lambda}(x,y)$ is continuous and increasing with respect to $\lambda$.
Many interesting properties of power means are given in \cite{Bu}. Power mean inequalities for some special functions can be found in \cite{A, AVV3, R, WZJ, ZWC}.

In this paper, we shall show a power mean inequality for the special function $m(r)$.
Our main result is the following theorem:

\begin{theorem}\label{thm:pm4m} 
Let ${\lambda}$ be a real number.
The inequality
\begin{equation}\label{ineq:pm4m}
M_{\lambda}(m(x),m(y))\leq m(M_{\lambda}(x,y))
\end{equation}
holds for all $x,y\in (0,1)$ if and only if
${\lambda}\leq0$. The reverse of (\ref{ineq:pm4m}) holds for all $x,y\in (0,1)$ if and only if
${\lambda}\geq C>0$, where $C$ is some constant. The sign of equality is
valid in (\ref{ineq:pm4m}) if and only if $x=y$. 
\end{theorem}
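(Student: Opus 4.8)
The plan is to transform the power mean inequality into a convexity statement and then reduce the whole family of assertions to the monotonicity and boundary behaviour of a single auxiliary function. Fix $\lambda\neq0$ and put $p=x^{\lambda}$, $q=y^{\lambda}$ together with $\Phi_{\lambda}(p)=m(p^{1/\lambda})^{\lambda}$. Raising (\ref{ineq:pm4m}) to the power $\lambda$ shows that (\ref{ineq:pm4m}) is equivalent to $\tfrac12\bigl(\Phi_{\lambda}(p)+\Phi_{\lambda}(q)\bigr)\le\Phi_{\lambda}\bigl(\tfrac{p+q}{2}\bigr)$ when $\lambda>0$ and to the reverse midpoint inequality when $\lambda<0$; for $\lambda=0$ it becomes the concavity of $t\mapsto\log m(e^{t})$. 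Since $m$ is smooth, (\ref{ineq:pm4m}) for all $x,y$ is therefore equivalent to the concavity of $\Phi_{\lambda}$ (for $\lambda>0$) or its convexity (for $\lambda<0$). A direct computation of $\Phi_{\lambda}''$, using the derivative formulas $\tfrac{d\K}{dr}=\tfrac{\E-r'^2\K}{rr'^2}$ and $\tfrac{d\E}{dr}=\tfrac{\E-\K}{r}$ and the fact that $m$ is strictly decreasing (which follows from $\tfrac{m'}{m}=\tfrac1{rr'^2}\bigl(\tfrac{\E}{\K}-\tfrac{\E'}{\K'}-1\bigr)<0$, so $m'<0$), shows that the sign of $\Phi_{\lambda}''$ is governed by a single expression. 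Setting
\[
g(r)=\frac{-\,r\,m''(r)/m'(r)}{1-r\,m'(r)/m(r)},
\]
I expect to obtain, for every real $\lambda$, that (\ref{ineq:pm4m}) holds for all $x,y$ if and only if $g(r)\le1-\lambda$ for every $r\in(0,1)$, while its reverse holds for all $x,y$ if and only if $g(r)\ge1-\lambda$ for every $r$. Consequently (\ref{ineq:pm4m}) holds exactly for $\lambda\le1-\sup_{r}g(r)$ and its reverse exactly for $\lambda\ge1-\inf_{r}g(r)$, so the theorem reduces to computing these two extrema.

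The first quantitative input is that $g(r)<1$ throughout $(0,1)$, i.e. that $t\mapsto\log m(e^{t})$ is strictly concave. Writing $a(r)=r\,m'(r)/m(r)=\tfrac1{r'^2}\bigl(\tfrac{\E}{\K}-\tfrac{\E'}{\K'}-1\bigr)$, this concavity amounts to $a'(r)<0$. I would prove it from the classical monotonicity of quotients of elliptic integrals: $\E/\K$ is strictly decreasing on $(0,1)$, hence $\E'/\K'=\E(r')/\K(r')$ is strictly increasing, so $n(r):=\E/\K-\E'/\K'-1$ is strictly decreasing and negative on $(0,1)$. Since $a=n/r'^2$ with $(r'^2)'=-2r$, one has $a'=\bigl(n'(1-r^2)+2rn\bigr)/(1-r^2)^2$, and both summands in the numerator are negative; thus $a'<0$ and $g<1$ on $(0,1)$, whence $\sup_{r}g(r)\le1$.

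The crux, and the main obstacle, is the boundary analysis pinning down the two extrema of $g$. Using $\K(r)\to\pi/2$, $\E(r)\to\pi/2$ as $r\to0^{+}$ together with the logarithmic singularity $\K'(r)\sim\log(4/r)$, one gets $m(r)\sim\log(4/r)$, and a careful expansion of $m,m',m''$ near $0$ should give $\lim_{r\to0^{+}}g(r)=1$; combined with $g<1$ this yields $\sup_{r}g(r)=1$, so (\ref{ineq:pm4m}) holds if and only if $\lambda\le0$. At the other end, the asymptotics $m(r)\sim r'^{2}\log(4/r')$ as $r\to1^{-}$ lead to $\lim_{r\to1^{-}}g(r)=0$. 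Because $g$ is smooth on $(0,1)$ (the denominator $1-a>1$ never vanishes and $m'<0$), these finite one-sided limits force $g$ to be bounded, so $\inf_{r}g(r)$ is finite and, since $g(1^{-})=0<1$, strictly less than $1$. Setting $C=1-\inf_{r}g(r)$ then produces a constant with $0<C<\infty$ for which the reverse inequality holds if and only if $\lambda\ge C$. The delicate point is the rigorous control of $m''$ near the endpoints, where both $m'$ and $m''$ blow up; this requires the full second-order expansions of $\K,\E$ (and of $\K',\E'$) together with the Legendre relation $\E\K'+\E'\K-\K\K'=\pi/2$, and this is where the bulk of the technical work lies.

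Finally, the equality case follows from the strict inequalities above: for $\lambda\le0$ one has $g<1\le1-\lambda$, so $\Phi_{\lambda}$ is strictly concave, and for $\lambda\ge C$ one has $g\ge1-\lambda$ with $\Phi_{\lambda}$ convex and strictly so except at isolated points, so in every case the power mean inequality for such $\Phi_{\lambda}$ is strict unless $p=q$, that is unless $x=y$. As a shortcut for the one-sided extensions, once (\ref{ineq:pm4m}) is known at $\lambda=0$ it follows for all $\lambda\le0$ directly from the monotonicity of $M_{\lambda}$ in $\lambda$ and the fact that $m$ is decreasing, and similarly for the reverse.
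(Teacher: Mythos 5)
Your reduction is, after unwinding, exactly the paper's. With $p=x^{\lambda}$ and $\Phi_{\lambda}(p)=m(p^{1/\lambda})^{\lambda}$, note that $4\E'(r)\K(r)-\pi=-\pi r\,m'(r)$, so the paper's auxiliary function in Lemma \ref{lem:funct-h} is $h(r)=\frac{4\E'\K-\pi}{m(r)}\left(\frac{m(r)}{r}\right)^{\lambda}=-\pi\,m'(r)\,r^{1-\lambda}m(r)^{\lambda-1}=-\pi\,\Phi_{\lambda}'(p)$, and a short computation from $\frac{h'}{h}=\frac{m''}{m'}+\frac{1-\lambda}{r}+(\lambda-1)\frac{m'}{m}$ gives $h'(r)>0\Leftrightarrow\lambda<1-g(r)$; hence your $1-g$ is \emph{identically} the function $H$ of Remark \ref{rmk:const-C}, your constant $1-\inf g$ is the paper's $C=\sup H$, and your claimed limits $g(0^{+})=1$, $g(1^{-})=0$ are the paper's $H(0)=0$, $H(1)=1$. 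Within this shared skeleton you have two genuine improvements: your proof that $g<1$ on $(0,1)$ (equivalently $H>0$, which is all the positivity in Lemma \ref{lem:funct-g} is used for) is complete and more elementary than the paper's, since $a(r)=rm'/m=(\E/\K-\E'/\K'-1)/r'^{2}$ and the monotonicities of $\E/\K$ (trivially decreasing, as $\E$ decreases and $\K$ increases) and $\E'/\K'$ immediately give $a'<0$; and your closing remark — that the case $\lambda\le0$ follows from the case $\lambda=0$ via monotonicity of $M_{\lambda}$ in $\lambda$ and the decrease of $m$, and likewise upward for the reverse — is a clean, valid shortcut the paper does not use.

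The genuine gap is the one you flag yourself: the endpoint limits $g(0^{+})=1$ and $g(1^{-})=0$ are only asserted (``should give''), yet they carry all of the remaining content — $g(0^{+})=1$ is precisely what makes (\ref{ineq:pm4m}) fail for every $\lambda>0$, and the finite limit at $1^{-}$ is what makes $C=1-\inf g$ finite with $0<C<\infty$. The limits are true, but your plan to extract them from second-order expansions of $m$, $m'$, $m''$ near both endpoints is the hard way. Since $m'(r)=(\pi-4\E'\K)/(\pi r)$, one more differentiation expresses $1-g$ in closed form in $\K,\E,\K',\E'$ — this is the paper's identity (\ref{eqn:funct-h}) — after which the endpoint behaviour reduces to first-order standard facts only (Lemma \ref{lem:citedlem1}(3),(4): decay of $r'^{c}\K$ and the limits of $(\K-\E)/(r^{2}\K)$), i.e.\ to the limits in Lemma \ref{lem:funct-g} together with $m/(4\E'\K-\pi)$ running from $\infty$ to $0$; no asymptotics of $m''$ are ever needed. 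One smaller point: at $\lambda=C$ your phrase ``convex and strictly so except at isolated points'' needs justification (e.g.\ $g$ is real-analytic and nonconstant, so the level set $\{g=1-C\}$ is discrete), though the paper is equally brisk at the corresponding step of Remark \ref{rmk:const-C}.
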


\section{Lemmas}

In order to prove our main result we need some lemmas, which we
present in this section. We establish some properties of certain
functions, which are defined in terms of the complete elliptic
integrals of the first and second kinds, $\K$ and $\E$,
respectively.

Now we list some derivative formulas \cite[Appendix E, pp. 474-475]{AVV3}:
$$\frac{d\K}{dr}=\frac{\E-r'^2\K}{rr'^2},\quad \frac{d\E}{dr}=\frac{\E-\K}{r},$$
and
$$\frac{d}{dr}m(r)=\frac{\pi-4\E'\K}{\pi{r}},$$
where $0<r<1$. By the derivative of $m(r)$, it is easy to see that $m(r)$ is strictly decreasing from $(0,1)$ onto $(0,\infty)$.

The following Lemma \ref{lem:citedlem1} is from \cite[Lemma 5.2(2)]{AQVV} and  \cite[Theorem 3.21 (1), (7), and Exercises 3.43 (32)]{AVV3}.

\medskip

\begin{lemma}\label{lem:citedlem1}
\begin{enumerate}
\item The function $f_1(r)=r'^2\K/{\E}$ is decreasing from
$(0,1)$ onto $(0,1)$. 
\item The function $f_2(r)=(\E-r'^2\K)/{r^2}$ is strictly increasing and convex from $(0,1)$ onto $(\pi/4,1)$. 
\item For each $c\in [1/2,\infty)$, the function $f_3(r)=r'^c\K$ is decreasing from
$[0,1)$ onto $(0,\pi/2]$. 
\item The function $f_4(r)={r^{-2}(\K-\E)}/{\K}$ is increasing and convex from $(0,1)$ onto $(1/2,1)$.
\end{enumerate}
\end{lemma}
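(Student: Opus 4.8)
The plan is to dispatch the four parts with one common toolkit: the derivative formulas for $\K$ and $\E$ recorded above, the endpoint data $\K(0)=\E(0)=\pi/2$, $\E(1)=1$, $\K(r)\to\infty$ as $r\to1^-$, and the monotone form of l'H\^opital's rule (if $F(a)=G(a)=0$ and $F'/G'$ is monotone on $(a,b)$, then $F/G$ inherits that monotonicity). Two elementary comparisons do most of the work: $\E<\K$ on $(0,1)$, which is immediate from the integrands, and $\E>r'\K$ on $(0,1)$, which I would get by setting $\phi(r)=\E-r'\K$, observing $\phi(0)=0$, and computing $rr'\phi'(r)=(1-r')(\K-\E)>0$, so $\phi>0$. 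I would also record two structural dependencies that shorten everything: part~(1) will supply the monotonicity in part~(4), and the lower range bound in part~(4) will supply part~(3).

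For part~(1), the limits $f_1(0^+)=1$ and $f_1(1^-)=0$ (the latter because $r'^2\K\to0$) give the range. Differentiating and clearing positive factors, the sign of $f_1'$ equals that of $(\E-\K)^2-r^2\K^2=(\E-(1+r)\K)(\E-(1-r)\K)$: the first factor is negative since $\E<\K$, while the second is positive because $r'>1-r$ on $(0,1)$ together with $\E>r'\K$ yields $\E>(1-r)\K$; hence $f_1'<0$. For part~(3), a direct differentiation shows the sign of $f_3'$ equals that of $\E-(1+(c-1)r^2)\K$. When $c\geq1$ this is negative since $\E<\K\leq(1+(c-1)r^2)\K$; when $1/2\leq c<1$ the factor $1+(c-1)r^2$ is smallest at $c=1/2$, so it suffices to prove $\E<(1-r^2/2)\K$, which is precisely the statement $f_4>1/2$ furnished by part~(4). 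The endpoint values $f_3(0)=\pi/2$ and $f_3(1^-)=0$ (the factor $r'^c$ beats the logarithmic blow-up of $\K$) give the range.

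For the monotonicity in parts~(2) and~(4) I would invoke the monotone l'H\^opital rule at the endpoint $r=0$, where numerator and denominator both vanish. Writing $f_2=N/r^2$ with $N=\E-r'^2\K$, the derivative formulas collapse to $N'=r\K$, so $N'/(r^2)'=\K/2$ is increasing; thus $f_2$ increases, with $\lim_{r\to0}f_2=\pi/4$ and $f_2(1^-)=1$. Writing $f_4=A/B$ with $A=\K-\E$ and $B=r^2\K$, the same formulas give $A'=r\E/r'^2$ and $B'=r(\E+r'^2\K)/r'^2$, whence $A'/B'=\E/(\E+r'^2\K)=1/(1+f_1)$; since $f_1$ is decreasing by part~(1), $A'/B'$ is increasing and $f_4$ increases, with limits $1/2$ and $1$.

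The main obstacle is the convexity in parts~(2) and~(4), since the monotone l'H\^opital rule yields monotonicity but not convexity. Both $f_2$ and $f_4$ are even in $r$, so each may be written as $g(r^2)$ for a function $g$ of $t=r^2$ with a power-series expansion whose coefficients are built from the Maclaurin coefficients of $\K$ and of $\K-\E$ (all of one sign). Since $f''(r)=2g'(t)+4t\,g''(t)$, it suffices to show that $g$ is both increasing and convex in $t$; this I would establish from the ratio-of-power-series criteria, namely that a quotient $\sum b_nt^n/\sum a_nt^n$ with $a_n,b_n>0$ is increasing, respectively convex, once the associated coefficient sequences are suitably monotone. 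Verifying the required coefficient monotonicity---equivalently, controlling the sign of the derivative of $f_2'=((2-r^2)\K-2\E)/r^3$ and of $f_4'=(\E^2-r'^2\K^2)/(r^3r'^2\K^2)$ after these reductions---is where the genuine computation lies, and is the step I expect to be the most delicate.
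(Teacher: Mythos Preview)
The paper does not prove this lemma at all; it is quoted verbatim from the literature (Anderson--Qiu--Vamanamurthy--Vuorinen, Lemma~5.2(2), and Anderson--Vamanamurthy--Vuorinen, Theorem~3.21(1),(7) and Exercise~3.43(32)). So your self-contained argument is a genuinely different route: the paper defers entirely to references, while you attempt a direct proof.

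Your treatment of the monotonicity and range statements is correct in all four parts. The factorisation $(\E-\K)^2-r^2\K^2=(\E-(1+r)\K)(\E-(1-r)\K)$ for part~(1), the dependency chain $(1)\Rightarrow(4)\Rightarrow(3)$, and the monotone l'H\^opital reductions for parts~(2) and~(4) are all sound; the auxiliary inequality $\E>r'\K$ via $\phi'>0$ is also right. For the convexity in part~(2) your sketch actually closes with no delicacy at all: from $N'=r\K$ one integrates termwise to get $f_2=(\pi/2)\sum_{n\ge0} a_n r^{2n}/(2n+2)$ with $a_n=\bigl((1/2)_n/n!\bigr)^2>0$, so $g(t)=f_2(\sqrt t)$ has all positive Taylor coefficients, whence $g',g''>0$ and $f_2''=2g'+4tg''>0$.

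The only genuine gap is the convexity in part~(4). There $f_4$ is a ratio of two series, and the ``ratio-of-power-series convexity criterion'' you invoke is neither stated precisely nor its coefficient hypothesis verified; the explicit expression $f_4'=(\E^2-r'^2\K^2)/(r^3r'^2\K^2)$ does not make the sign of $f_4''$ transparent. This step would need to be carried out, or replaced by the cited reference, before the proof is complete.
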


\begin{lemma}\label{lem:funct-g}
 For $0<r<1$, let
$$g(r)=\frac{\K\K'(\E\E'+r^2\K\K'-\K\E')}{(4\E'\K-\pi)^2}.$$
 Then $g(r)>0$ for all $r\in(0,1)$, and $g(0^+)=g(1^-)=0$ .
\end{lemma}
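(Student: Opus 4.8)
The plan is to prove Lemma 2.6 by establishing two facts separately: the positivity of $g(r)$ on $(0,1)$, and the two boundary limits. Since the denominator $(4\E'\K-\pi)^2$ is manifestly positive on $(0,1)$ (we know $4\E'\K-\pi \neq 0$ there because $m'(r)=(\pi-4\E'\K)/(\pi r)$ has constant sign, as $m$ is strictly decreasing), the sign of $g$ is governed entirely by the numerator
$$N(r)=\K\K'\bigl(\E\E'+r^2\K\K'-\K\E'\bigr).$$
Since $\K,\K'>0$ throughout, it suffices to show that the bracketed factor
$$h(r)=\E\E'+r^2\K\K'-\K\E'$$
is positive on $(0,1)$.

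**First I would** try to reduce $h(r)>0$ to a known identity or monotonicity result. The natural tool here is the Legendre relation $\E\K'+\E'\K-\K\K'=\pi/2$, which links exactly the four quantities appearing in $h$. I would use it to rewrite the combination $\E\E'+r^2\K\K'-\K\E'$ in a form whose positivity is transparent. For instance, substituting ${r^2}\K = \K - {r'}^2\K$ and regrouping, one can try to express $h$ as a sum of a positive multiple of the Legendre quantity $\pi/2$ plus manifestly nonnegative remainder terms, or alternatively factor it using the monotonicity statements in Lemma 2.5. In particular, Lemma 2.5(1) gives ${r'}^2\K<\E$, so $\E-{r'}^2\K>0$; combined with $\E'-{r'}^2\K'$-type estimates (the dual inequality ${r}^2\K'<\E'$ applied at $r'$), these should control the cross terms. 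The cleanest route is likely to isolate $\E\E'-\K\E'=\E'(\E-\K)<0$ (since $\E<\K$) and then argue that the positive term $r^2\K\K'$ dominates this deficit; this is where the real content sits.

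**The main obstacle** I anticipate is precisely this domination: showing $r^2\K\K' \geq \E'(\K-\E)$ for all $r\in(0,1)$, since both sides vanish or blow up at the endpoints and a crude bound will not suffice. I would handle it by dividing through to form the ratio and appealing to a monotonicity argument — writing the inequality as $r^2 \geq (\K-\E)/(\K\K')\cdot\E'$ and then recognizing $(\K-\E)/\K = r^2 f_4(r)$ via Lemma 2.5(4), so that after cancellation the claim becomes $\K' \geq f_4(r)\,\E'$, i.e. $\K'/\E' \geq f_4(r)$. Evaluating $f_4$ at $r'$ and using that $f_4$ maps into $(1/2,1)$ while $\K'/\E'\geq 1$ everywhere (since $\K'\geq\E'$) would then close the gap. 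If this substitution does not quite align, the fallback is to define $\phi(r)=h(r)$ directly, compute $\phi'$ using the derivative formulas for $\K$ and $\E$ listed at the start of the section, and show $\phi$ is monotone with a favorable boundary value.

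**For the boundary limits**, I expect $g(0^+)=g(1^-)=0$ to follow from asymptotics. As $r\to0^+$: $\K,\K'\to\pi/2$ but $\K'\to\infty$? — more carefully, $\K(0)=\pi/2$ while $\K'(0^+)=\K(1^-)=\infty$, so the behavior is driven by the blow-up of $\K'$ and the $\log$ asymptotics $\K'(r)\sim\log(4/r)$ together with $\E'(r)\to 1$, $\E(r)\to\pi/2$. I would substitute the standard small-$r$ expansions into numerator and denominator and check that the numerator vanishes to higher order than the denominator, giving the limit $0$; the symmetry $r\leftrightarrow r'$ of the construction then delivers $g(1^-)=0$ by an identical computation at the other endpoint. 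This part is routine asymptotic bookkeeping, so I would keep it brief and lean on the tabulated expansions of $\K,\K',\E,\E'$ near $0$ and $1$.
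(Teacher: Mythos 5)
Your proof is correct in substance but takes a genuinely different route from the paper's at both steps. For positivity, the paper factors the bracket as $\E\bigl(\E'-\frac{r'^2\K}{\E}\cdot\frac{\E'-r^2\K'}{r'^2}\bigr)$ and invokes Lemma \ref{lem:citedlem1} (1) and (2): the factors $f_1(r)$ and $f_2(r')$ both lie in $(0,1)$ while $\E'>1$. You instead write the bracket as $r^2\K\K'-\E'(\K-\E)$ and reduce via Lemma \ref{lem:citedlem1} (4) to $\K'/\E'\geq f_4(r)$, which holds since $\K'\geq\E'$ and $f_4<1$; this is equally valid and equally short. Note that the ``main obstacle'' you anticipate dissolves at once: no delicate domination argument is needed, since your own $f_4$-substitution plus the crude bounds $\K'/\E'\geq 1 > f_4(r)$ already closes it (one small slip: $f_4$ is evaluated at $r$, not at $r'$, though only its range $(1/2,1)$ matters). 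For the limits, the paper avoids expansions entirely, regrouping the numerator so that Lemma \ref{lem:citedlem1} (3) (i.e.\ $r'^c\K\to 0$) and (4) yield both limits; your asymptotic route with $\K'(r)\sim\log(4/r)$, $\E'\to1$ also works: as $r\to0^+$ the denominator tends to $\pi^2\neq0$ while the numerator is of order $r^2\log^2(1/r)$, and as $r\to1^-$ the numerator grows like $\log(1/r')$ against a denominator of order $\log^2(1/r')$. One caveat there: $g$ is \emph{not} symmetric under $r\leftrightarrow r'$ (the term $\K\E'$ becomes $\K'\E$ and $r^2\K\K'$ becomes $r'^2\K\K'$), so the second endpoint is not ``an identical computation'' by symmetry --- indeed the two endpoint analyses are qualitatively different, as just noted --- but each is routine once the standard expansions are in hand, so your plan goes through.
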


\begin{proof} 
By the formula (\ref{eqn:E}) and the parts (1) and (2) of Lemma \ref{lem:citedlem1}, we have
$$\frac{(4\E'\K-\pi)^2}{\K\K'}g(r)=\E\left(\E'-\frac{r'^2\K}{\E}\,\frac{\E'-r^2\K'}{r'^2}\right)>0,$$
and hence $g(r)>0$ for all $r\in(0,1)$.

By Lemma \ref{lem:citedlem1} (3) and (4), we get
\begin{eqnarray*}
g(0^+)&=&\lim\limits_{r\rightarrow 0+}g(r)=\lim\limits_{r\rightarrow 1-}\frac{\K\K'(\E\E'+r'^2\K\K'-\K'\E)}{(4\E\K'-\pi)^2}\\
&=&\lim\limits_{r\rightarrow 1-}\frac{\K'}{(4\E\K'-\pi)^2}\left[(r'\K)^2\K'-\frac{\K'-\E'}{r'^2\K'}(r'^2\K)(\K'\E)\right]\\
&=& 0,
\end{eqnarray*}
and
$$g(1^-)=\lim\limits_{r\rightarrow 1-}g(r)=\lim\limits_{r\rightarrow 1-}\frac{\K'}{(4\E'-\pi/{\K})^2}\left(\frac{\E\E'}{\K}+r^2\K'-\E'\right)=0.$$
\end{proof}

\begin{lemma}\label{lem:funct-h}
Let $\lambda$ be a real number. The
function 
$$h(r)=\frac{4\E'\K-\pi}{m(r)}\left(\frac{m(r)}{r}\right)^{\lambda}$$
 is strictly increasing on
$(0,1)$ if and only if $\lambda\leq 0$.
\end{lemma}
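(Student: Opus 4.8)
The plan is to control the sign of $h'$ through logarithmic differentiation after first confirming $h>0$. Since the supplied formula $m'(r)=(\pi-4\E'\K)/(\pi r)$ together with the fact that $m$ is strictly decreasing forces $Q:=4\E'\K-\pi>0$ on $(0,1)$, we have $h(r)=Q\,m(r)^{\lambda-1}r^{-\lambda}>0$, so $h'$ carries the same sign as $(\log h)'$. Writing $\log h=\log Q+(\lambda-1)\log m-\lambda\log r$, I would differentiate term by term, using the given $m'$, the formula $\frac{d}{dr}\K=(\E-r'^2\K)/(rr'^2)$, and the chain-rule identity $\frac{d}{dr}\E'=r(\K'-\E')/r'^2$.

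The central computation is the derivative of the factor $Q$. Combining the two derivative formulas and using $r^2+r'^2=1$ to collapse the $\K\E'$ terms, I expect
\[
\frac{d}{dr}\big(4\E'\K-\pi\big)=\frac{4}{rr'^2}\big(\E\E'+r^2\K\K'-\K\E'\big)=\frac{4P}{rr'^2},
\]
where $P:=\E\E'+r^2\K\K'-\K\E'$ is precisely the bracketed factor appearing in $g$; that is, $g=\K\K'P/Q^2$, whence $P=gQ^2/(\K\K')$. Substituting $m'/m=-Q/(2rr'^2\K\K')$ and this expression for $P$, then multiplying through by $r$, I anticipate the clean form
\[
r\,\frac{h'(r)}{h(r)}=u\,(8g+1-\lambda)-\lambda,\qquad u:=\frac{Q}{2r'^2\K\K'}>0,
\]
where $g>0$ is furnished by Lemma \ref{lem:funct-g}.

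For the ``if'' direction, when $\lambda\le0$ every contribution on the right is nonnegative and the first is strictly positive: $u>0$, $8g+1-\lambda\ge 8g+1>0$, and $-\lambda\ge0$. Hence $h'>0$ and $h$ is strictly increasing. For the converse I would study the limit as $r\to0^{+}$. From $\K(0)=\E(0)=\pi/2$, $\E(1)=1$ and $\K(1)=\infty$ one gets $Q\to\pi$, $r'^2\to1$ and $\K\K'\to\infty$, so that $u\to0$, while $g(0^{+})=0$ by Lemma \ref{lem:funct-g}. Therefore $r\,h'/h\to-\lambda$, which is strictly negative whenever $\lambda>0$, so $h'<0$ on a right neighborhood of $0$ and $h$ fails to be increasing there. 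This gives: $h$ strictly increasing $\Rightarrow\lambda\le0$.

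The step I expect to be the main obstacle is the derivative of $Q$: securing the cancellation $-r'^2\K\E'-r^2\K\E'=-\K\E'$ and, more importantly, recognizing the surviving factor $P$ as exactly the numerator inside $g$. It is this identification that lets the single Lemma \ref{lem:funct-g} do double duty, supplying $g>0$ for the monotonicity and $g(0^{+})=0$ for the boundary argument. The remaining endpoint limits are routine once the boundary values of $\K,\E,\K',\E'$ are inserted.
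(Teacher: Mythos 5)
Your proposal is correct and follows essentially the same route as the paper: logarithmic differentiation, reduction of the sign of $h'$ to the factor $u(8g+1-\lambda)-\lambda$ (which is exactly the paper's expression $\bigl(\frac{4\E'\K-\pi}{\pi r m(r)}+\frac1r\bigr)\bigl[H(r)-\lambda\bigr]$ multiplied out, since $u=\frac{4\E'\K-\pi}{\pi m(r)}$), with Lemma \ref{lem:funct-g} supplying $g>0$ for sufficiency and $g(0^+)=0$ for necessity. Your explicit limit $r\,h'/h\to-\lambda$ as $r\to0^+$ merely makes precise the converse that the paper compresses into its remark that $m(r)/(4\E'\K-\pi)$ maps $(0,1)$ onto $(0,\infty)$, so $H(0^+)=0$.
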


\begin{proof}
By logarithmic differentiation,
\begin{eqnarray}\label{eqn:funct-h}
\frac{h'(r)}{h(r)}&=&\frac{4\left[\K\frac{\E'-\K'}{r'}(-\frac{r}{r'})+\E'\frac{\E-r'^2\K}{rr'^2}\right]}{4\E'\K-\pi}-\frac{\pi-4\E'\K}{{\pi} rm(r)}+\lambda\left(\frac{\pi-4\E'\K}{{\pi}rm(r)}-\frac{1}{r}\right)\nonumber\\
&=&\left(\frac{4\E'\K-\pi}{\pi r m(r)}+\frac1r\right)\times\nonumber\\
& &\left[\frac{1}{1+\frac{{\pi}m(r)}{4\E'\K-\pi}}\left(1+8\frac{\K\K'(\E\E'+r^2\K\K'-\K\E')}{(4\E'\K-\pi)^2}\right)-\lambda\right], \end{eqnarray}
which is positive for all $r\in(0,1)$ if and only if $\lambda\leq0$
by Lemma \ref{lem:funct-g}, since ${m(r)}/{(4\E'\K-\pi)}$ is clearly decreasing from $(0,1)$ onto $(0,\infty)$.
\end{proof}

\begin{remark}\label{rmk:const-C}
Let 
$$H(r)=\frac{1}{1+\frac{{\pi}m(r)}{4\E'\K-\pi}}\left(1+8\frac{\K\K'(\E\E'+r^2\K\K'-\K\E')}{(4\E'\K-\pi)^2}\right)$$ 
for $r\in(0,1)$ and $H(0)=0$, $H(1)=1$. Then $H$ is continuous on $[0,1]$. Hence there exists $r_0\in(0,1]$ such that $H$ obtains its maximum $C$ at $r_0$. Thus it is easy to conclude from (\ref{eqn:funct-h}) that $h$ is strictly decreasing on $(0,1)$ if and only if $\lambda\geq C$.
\end{remark}

\begin{openproblem}
What is the exact expression for $C$?
\end{openproblem}

\section{Proof of the main result}

We are now in a position to prove the main result.

\begin{proof}[Proof of Theorem \ref{thm:pm4m}] 
We first prove the inequality (\ref{ineq:pm4m}) for $\lambda\neq0$. We may assume that $x\leq y$.
Define
$$F(x,y)=m\left(M_\lambda(x,y)\right)^\lambda-\frac{m(x)^\lambda+m(y)^\lambda}{2},\quad \lambda\neq0.$$
Let $t=M_\lambda(x,y)$, then $\frac{\partial t}{\partial x}=\frac12(\frac xt)^{\lambda-1}$. If $x<y$, we have that $t>x$ .
By differentiation,
\begin{eqnarray*}
\frac{\partial F}{\partial x}&=&\frac{\lambda}2m(t)^{\lambda-1}\frac{\pi-4\E'(t)\K(t)}{\pi t}\left(\frac{x}{t}\right)^{\lambda-1}
    -\frac{\lambda}2m(x)^{\lambda-1}\frac{\pi-4\E'(x)\K(x)}{\pi x}\\
    &=&\frac{\lambda x^{\lambda-1}}{2\pi}\left[\frac{4\E'(x)\K(x)-\pi}{m(x)}\left(\frac{m(x)}{x}\right)^{\lambda}
    -\frac{4\E'(t)\K(t)-\pi}{m(t)}\left(\frac{m(t)}{t}\right)^{\lambda}\right]
\end{eqnarray*}
which is positive if and only if $\lambda<0$ by Lemma \ref{lem:funct-h}. Hence
$F(x,y)$ is strictly increasing with respect to $x$ and $F(x,y)\leq
F(y,y)=0$. We now obtain the inequality
$$m\left(M_\lambda(x,y)\right)^\lambda\leq\frac{m(x)^\lambda+m(y)^\lambda}{2},$$
that is $m\left(M_\lambda(x,y)\right)\geq
M_{\lambda}\left(m(x),m(y)\right)$ if and only if $\lambda<0$, with the
equality if and only if $x=y$.

Similarly, by the statement in Remark \ref{rmk:const-C} one can see that the reverse of (\ref{ineq:pm4m}) holds for all $x,y\in (0,1)$ if and only if
${\lambda}\geq C>0$, $C$ is the same as Remark \ref{rmk:const-C}, with the
equality if and only if $x=y$.

Now we prove the inequality
(\ref{ineq:pm4m}) for $\lambda=0$. We may assume that $x\leq y$.
Define
$$G(x,y)=\frac{m(\sqrt{xy})^2}{m(x)m(y)}.$$
Let $t=\sqrt{xy}$, then $\frac{\partial t}{\partial x}=\frac12\frac tx$. If $x<y$, we have that $t>x$ .
By logarithmic differentiation, we have
\begin{eqnarray*}
\frac1{G(x,y)}\frac{\partial G}{\partial x}&=&\frac1{m(t)}\frac{\pi-4\E'(t)\K(t)}{\pi t}\frac{t}{x}
    -\frac1{m(x)}\frac{\pi-4\E'(x)\K(x)}{\pi x}\\
    &=&\frac1{\pi x}\left(\frac{4\E'(x)\K(x)-\pi}{m(x)}-\frac{4\E'(t)\K(t)-\pi}{m(t)}\right)
\end{eqnarray*}
which is negative. Hence
$G(x,y)$ is strictly decreasing with respect to $x$ and $G(x,y)\geq
G(y,y)=1$. We now obtain the inequality
$$m(\sqrt{xy})\geq\sqrt{m(x)m(y)},$$
that is $m\left(M_0(x,y)\right)\geq
M_0\left(m(x),m(y)\right)$, with the
equality if and only if $x=y$. This completes the proof. 
\end{proof}

The following corollary is clear.

\begin{corollary} 
The function $m(r)$ is concave on $(0,1)$ with respect to power mean of order $\lambda$ if and only if $\lambda\leq0$,
and convex on $(0,1)$ with respect to power mean of order $\lambda$ if and only if $\lambda\geq C$.
\end{corollary}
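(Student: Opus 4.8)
The plan is to read the Corollary off directly from Theorem \ref{thm:pm4m} by unwinding the definition of mean concavity and convexity; as the authors note, once the terminology is in place there is nothing left to do. Recall that a positive function $f$ on an interval $I$ is said to be \emph{concave with respect to the power mean of order $\lambda$} (briefly, $M_\lambda$-concave) if
$$M_\lambda(f(x),f(y)) \leq f(M_\lambda(x,y))$$
for all $x,y\in I$, and \emph{convex with respect to the power mean of order $\lambda$} ($M_\lambda$-convex) if the reverse inequality holds for all $x,y\in I$. For $\lambda=1$ this reduces to ordinary midpoint concavity and convexity, since $M_1$ is the arithmetic mean. With this convention, the $M_\lambda$-concavity of $m$ on $(0,1)$ is literally the assertion of inequality (\ref{ineq:pm4m}), while its $M_\lambda$-convexity is literally the reverse of (\ref{ineq:pm4m}).

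First I would invoke the first part of Theorem \ref{thm:pm4m}, namely that (\ref{ineq:pm4m}) holds for all $x,y\in(0,1)$ if and only if $\lambda\leq 0$. By the definition above, this biconditional is exactly the statement that $m$ is $M_\lambda$-concave on $(0,1)$ if and only if $\lambda\leq 0$, which is the first half of the Corollary. Next I would invoke the second part of the Theorem, that the reverse of (\ref{ineq:pm4m}) holds for all $x,y\in(0,1)$ if and only if $\lambda\geq C$ (with $C>0$); again by definition this says precisely that $m$ is $M_\lambda$-convex on $(0,1)$ if and only if $\lambda\geq C$, giving the second half. Since the statements transferred from Theorem \ref{thm:pm4m} are already two-sided, both the sufficiency and the necessity directions of the Corollary come for free.

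Because the argument is purely a translation of notation, I do not expect any genuine obstacle. The only points requiring a moment's care are to fix the direction convention in the definition of mean concavity so that it matches (\ref{ineq:pm4m}) rather than its reverse, and to note that the borderline case $\lambda=0$ (the geometric mean) is already absorbed into the first part of the Theorem. Finally, although the Corollary records only the non-strict concavity and convexity, the equality clause of Theorem \ref{thm:pm4m} — equality if and only if $x=y$ — shows that these properties are in fact strict off the diagonal, so no further verification of strictness is needed.
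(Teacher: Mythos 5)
Your proposal is correct and matches the paper exactly: the paper offers no proof at all, stating only that the corollary ``is clear,'' and your definitional unwinding of $M_\lambda$-concavity and $M_\lambda$-convexity as inequality (\ref{ineq:pm4m}) and its reverse is precisely the intended immediate translation of Theorem \ref{thm:pm4m}. Your added remarks on the $\lambda=0$ case and on strictness off the diagonal are accurate but not needed.
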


\medskip

\subsection*{Acknowledgments}

This research is partly supported by the NNSF of China (No. 11071069) and the NSF of Zhejiang Province (No. Y6100170).

\end{document}